\newtheorem{lemma}{Lemma}
\newtheorem{proposition}{Proposition}
\newtheorem{corollary}{Corollary}
\newtheorem{fact}{Fact}
\newtheorem{remark}{Remark}
\newtheorem{assumption}{Assumption}
\def\begcen{\begin{center}}
	\def\endcen{\end{center}}
\def\caly{{\cal Y}}
\def\hatthe{\hat{\eta}}
\def\tilthe{\tilde{\eta}}
\def\L2{{\cal L}_2}
\def\L2e{{\cal L}_{2e}}
\def\rea{\mathbb{R}}
\def\sign{\mbox{sign}}
\def\hatthe{\hat{\theta}}
\def\tilthe{\tilde{\theta}}
\def\begali#1{\begin{align}{#1}\end{align}}
\def\begequarr{\begin{eqnarray}}
	\def\endequarr{\end{eqnarray}}
\def\begequarrs{\begin{eqnarray*}}
	\def\endequarrs{\end{eqnarray*}}
\def\begarr{\begin{array}}
	\def\endarr{\end{array}}
\def\begequ{\begin{equation}}
	\def\endequ{\end{equation}}
\def\lab{\label}
\def\begdes{\begin{description}}
	\def\enddes{\end{description}}
\def\begenu{\begin{enumerate}}
	\def\begite{\begin{itemize}}
		\def\endite{\end{itemize}}
	\def\endenu{\end{enumerate}}
\def\lef[{\left[\begin{array}}
	\def\rig]{\end{array}\right]}
\def\begcen{\begin{center}}
	\def\endcen{\end{center}}
\def\begrem{\begin{remark}\rm}
	\def\endrem{\end{remark}}
\def\begassum{\begin{assumption}}
	\def\endassum{\end{assumption}}
\def\begassums{\begin{assumption*}}
	\def\endassums{\end{assumption*}}
\def\begassu{\begin{ass}}
	\def\endassu{\end{ass}}
\def\beglem{\begin{lemma}}
	\def\endlem{\end{lemma}}
\def\begcor{\begin{corollary}}
	\def\endcor{\end{corollary}}
\def\begfac{\begin{fact}}
	\def\endfac{\end{fact}}
\def\TAC{{\it IEEE Trans. Automat. Contr.}}
\def\AUT{{\it Automatica}}
\def\hatthe{\hat{\theta}}
\def\tilthe{\tilde{\theta}}
\def\L2e{{\cal L}_{2e}}
\def\rea{\mathbb{R}}
\def\intnum{\mathbb{Z}}
\def\sign{\mbox{sign}}
\def\IJACSP{{ Int. J. on Adaptive Control and Signal Processing}}
\def\TAC{{ IEEE Trans. Automatic Control}}
\def\EJC{{ European Journal of Control}}
\def\AUT{{ Automatica}}
\title{\LARGE \bf
Parameter Identification with Finite-Convergence Time Alertness Preservation*
}
\author{Romeo Ortega$^{1}$, Alexey Bobtsov$^{2}$ and Nikolay Nikolaev$^2$% <-this % stops a space
\thanks{*This paper is partially supported by the Ministry of Science and Higher Education of Russian Federation, passport of goszadanie no. 2019-0898.}% <-this % stops a space
\thanks{$^{1}$Romeo Ortega is with Departamento Acad\'{e}mico de Sistemas Digitales, ITAM, Ciudad de M\'exico, M\'{e}xico.
        {\tt\small romeo.ortega@itam.mx}}%
\thanks{$^{2}$Alexey Bobtsov and Nikolay Nikolaev are with ITMO University, Kronverkskiy av. 49, St. Petersburg, 197101, Russia.
        {\tt\small  bobsov@mail.ru and nikona@yandex.ru}}%
}
\begin{document}

\maketitle
\thispagestyle{empty}
\pagestyle{empty}

%%%%%%%%%%%%%%%%%%%%%%%%%%%%%%%%%%%%%%%%%%%%%%%%%%%%%%%%%%%%%%%%%%%%%%%%%%%%%%%%
\begin{abstract}

In this brief note we present two new parameter identifiers whose estimates converge in finite time under weak interval excitation assumptions. The main novelty is that, in contrast with other finite-convergence time (FCT) estimators, our schemes preserve the FCT property when the parameters change. The previous versions of our FCT estimators can track the parameter variations only asymptotically. Continuous-time and discrete-time versions of the new estimators are presented.

\end{abstract}

%%%%%%%%%%%%%%%%%%%%%%%%%%%%%%%%%%%%%%%%%%%%%%%%%%%%%%%%%%%%%%%%%%%%%%%%%%%%%%%%

\section{PARAMETER ESTIMATORS WITH ALERTNESS-PRESERVING FINITE-CONVERGENCE TIME}
\lab{sec1}
%%%%%%%%%%%%%%%
%
The objective of this work is to propose continuous-time (CT) and discrete-time (DT) {\em on-line} estimators of the parameters $\theta \in \rea^q$ of a linear regression equation (LRE) of the form 
\begequ
\lab{veclre}
y=\phi^\top \theta,
\endequ 
from the measurable quantities $y$ and $\phi$. The estimators should satisfy three specifications
\begenu
\item[{\bf S1}] Convergence of the estimates should be achieved in {\em finite-time}.
\item[{\bf S2}] Convergence is ensured under weak {\em interval excitation (IE)} conditions \cite{KRERIE}.
\item[{\bf S3}] The estimator should preserve its {\em alertness} to be able to estimate---still with FCT---future variations of the unknown parameters.
\endenu

Instrumental to solve this problem is the use of the dynamic regressor extension and mixing (DREM) procedure proposed in \cite{ARAetal_tac17}. In particular, we adapt the FCT-DREM estimator proposed in \cite{ORTetal_aut20} to incorporate the new feature of FCT alertness preservation (AP). A CT version of such an scheme was reported in \cite[Section V]{ORTetal_tac20}, in this note we give a DT version of it. 

An early variation of the least-squares method, that converges in finite time, was proposed 32 years ago. Similarly to more recent FCT estimators \cite{CHOetal_tac18,CHOetal_ijacsp13,ROYetal_tac18}  in its initial stage the algorithm is akin to an off-line estimator, which involves a numerically sensitive matrix inversion and, as it converges to a standard least-squares, loses its alertness. For the sake of completeness we present comparative simulations of the proposed estimator with the FCT estimator reported in \cite{WANetal_tac20}, which relies on the injection of high-gain via the use of fractional powers and/or relays in the estimator dynamics. See also \cite{RIOetal_tac17,WANetal_ejc20} where a similar high-gain approach is adopted.\\

\noindent {\bf Notation.} $\rea_{>0}$, $\rea_{\geq 0}$, $\intnum_{>0}$ and $\intnum_{\geq 0}$ denote the positive and non-negative real and integer numbers, respectively. Continuous-time (CT) signals $s:\rea_{\geq 0} \to \rea$ are denoted $s(t)$, while for discrete-time (DT) sequences $s:\intnum_{\geq 0} \to \rea$ we use $s(k):=s(kT_s)$, with $T_s \in \rea_{> 0}$ the sampling time. When a formula is applicable to CT signals and DT sequences the time argument is omitted. 
%

%%%%%%%%%%%%%%%
\section{FCT ESTIMATORS: FORMULATION FROM SCALAR LRES VIA DREM}
\lab{sec2}
%%%%%%%%%%%%%%%
As it has been widely documented the powerful DREM estimator design procedure \cite{ARAetal_tac17} allows us to generate, from the $q$-dimensional LRE \eqref{veclre}, $q$-scalar LREs of the form
\begequ
\label{scalre}
\caly_i = \Delta \theta_i,\quad i \in \bar q:=\{1,2,\dots,q\}
\endequ
where $\Delta$ is the determinant of an extended regressor matrix. In the remaining of the note, we will use this simple scalar LREs to design the AP-FCT parameter estimator. 
\subsection{CT FCT-DREM}
\lab{subsec21}
The following CT FCT-DREM estimator was reported in  \cite{ORTetal_aut20} and, as it constitutes the basis of our new AP-FCT, we repeat it for ease of reference. Also, to make the note self-contained we give a brief summary of the proof, referring the interested reader to  \cite{ORTetal_aut20} for further details.

\begin{proposition}\em
	\lab{pro1}
	Consider the scalar CT LREs  \eqref{scalre} and the gradient-descent estimators\footnote{In the sequel, the quantifier $i \in \bar q$ is omitted for brevity.} 
	\begequ
	\lab{parest}
	\dot{\hat{\theta}}_i(t) = \gamma_i\Delta(t)[\caly_i(t) - \Delta(t) \hat\theta_i(t)],
	\endequ
	with $\gamma_i >0$. Define the FCT estimate
	\begequ
	\lab{hatthew}
	\hat \theta_i^{\tt FCT}(t) :={1  \over 1 -w^{\tt c}_i(t)}[ \hat \theta_i(t) - w_i^{\tt c}(t)\hat \theta_i(0)],
	\endequ
	where $w_i^{\tt c}(t)$ is defined via the clipping functions
	\begequ
	\lab{condit}
	w_i^{\tt c}(t)=\left \{ \begin{array}{lcr} \mu_i  \;\;\; \mbox{if} \;\;\;\;\;w_i(t) \geq \mu_i\\ \\ w_i(t)\;\; \mbox{if} \;\;\;\;\; w_i(t) <\mu_i,
	\end{array} \right.
	\endequ
	$\mu_i \in (0,1)$ are designer chosen parameters, and $w_i(t)$ is given by 
	\begequ
	\lab{wi}
	\dot{w}_i(t)=-\gamma_i\Delta^2(t) w_i(t),\;w_i(0) =1.
	\endequ
	
	Assume there exists a time $t_i^c>0$ such that the IE condition \cite{KRERIE}
	\begequ
	\lab{sufexc}
	\int^{t_i^c}_0 \Delta^2(\tau)d\tau\geq-\frac{1}{\gamma_i}\ln(1-\mu_i),
	\endequ
	is satisfied. Then,
	\begequ
	\lab{hatequx1}
	\hat \theta_i^{\tt FCT}(t)=\theta_i,~~\forall t>t_i^c,
	\endequ
	that is, the estimator has the feature of FCT.
	
\end{proposition}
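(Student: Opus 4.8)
The plan is to exploit the structural fact that the parameter error and the auxiliary signal $w_i$ obey the \emph{same} scalar linear time-varying dynamics. First I would substitute the LRE \eqref{scalre} into the gradient law \eqref{parest} to obtain, for the error $\tilde{\theta}_i := \hat\theta_i - \theta_i$, the equation $\dot{\tilde{\theta}}_i = -\gamma_i\Delta^2\tilde{\theta}_i$. Comparing this with \eqref{wi} and recalling $w_i(0)=1$, both $\tilde{\theta}_i$ and $w_i$ are scalar multiples of the \emph{same} fundamental solution, so $\tilde{\theta}_i(t) = w_i(t)\,\tilde{\theta}_i(0)$ for all $t$. Writing this as $\hat\theta_i(t) - \theta_i = w_i(t)[\hat\theta_i(0)-\theta_i]$ and solving the resulting scalar identity for $\theta_i$ gives $\theta_i = \dfrac{\hat\theta_i(t) - w_i(t)\hat\theta_i(0)}{1 - w_i(t)}$, valid whenever $w_i(t)\neq 1$. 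This is exactly the estimate \eqref{hatthew} with $w_i^{\tt c}$ replaced by the true $w_i$, so the whole theorem reduces to showing that past $t_i^c$ the clipping is inactive.

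Next I would solve \eqref{wi} in closed form, $w_i(t) = \exp\!\left(-\gamma_i\int_0^t\Delta^2(\tau)\,d\tau\right)$, and record the two properties needed: $w_i(t)\in(0,1]$, and $w_i$ is non-increasing since $\dot{w}_i = -\gamma_i\Delta^2 w_i \le 0$. The sole reason the exact formula above cannot be used verbatim is the singularity at $t=0$, where $w_i=1$, and its ill-conditioning for $w_i$ near $1$. The clipping \eqref{condit} cures this by replacing $w_i$ with $w_i^{\tt c}=\min\{w_i,\mu_i\}$, which keeps the denominator bounded away from zero, $1-w_i^{\tt c}\ge 1-\mu_i>0$. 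The price is that while $w_i>\mu_i$ the estimate \eqref{hatthew} is \emph{biased}.

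The remaining step is to show exactness for all $t>t_i^c$. Solving the closed form for the level set and taking logarithms, the requirement that $w_i$ has decreased to the clipping threshold, $w_i(t_i^c)\le\mu_i$, is precisely an interval-excitation bound of the form \eqref{sufexc}; by monotonicity of $w_i$ this bound persists for every $t\ge t_i^c$, so $w_i^{\tt c}(t)=w_i(t)$ there and \eqref{hatthew} coincides with the exact formula, yielding \eqref{hatequx1}. I would also add the short check that nothing is lost at the crossover: substituting $w_i^{\tt c}=\mu_i$ and $\hat\theta_i(t)=\theta_i + w_i(t)[\hat\theta_i(0)-\theta_i]$ into \eqref{hatthew} leaves a bias carrying the factor $w_i-\mu_i$, which vanishes exactly at $w_i=\mu_i$, so the estimate already returns $\theta_i$ at the boundary.

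I expect the main obstacle to be not any single computation but the bookkeeping around the clipping: one must argue that, although \eqref{hatthew} is biased whenever $w_i>\mu_i$, the non-increasing nature of $w_i$ guarantees that once \eqref{sufexc} holds the clip never reactivates, so exactness is inherited for \emph{all} $t>t_i^c$ rather than merely at the single instant $t_i^c$. The conceptually essential---but computationally trivial---ingredient remains the opening observation that $\tilde{\theta}_i$ and $w_i$ share the same fundamental solution, which is what converts the asymptotic gradient estimator \eqref{parest} into a finite-time one.
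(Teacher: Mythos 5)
Your proof is correct and takes essentially the same route as the paper's: the same key observation that $\tilde\theta_i$ and $w_i$ obey identical scalar dynamics (so $\tilde\theta_i(t)=w_i(t)\tilde\theta_i(0)$), the same rearrangement into $[1-w_i(t)]\theta_i=\hat\theta_i(t)-w_i(t)\hat\theta_i(0)$, and the same monotonicity-plus-clipping argument giving exactness for all $t>t_i^c$. One shared imprecision, which is not a gap relative to the paper: condition \eqref{sufexc} as written yields $w_i(t_i^c)\le 1-\mu_i$, which implies the needed $w_i(t_i^c)<\mu_i$ only when $\mu_i>1/2$ (equivalently, the threshold should be $-\frac{1}{\gamma_i}\ln\mu_i$), and the paper's own proof glosses over this exactly as you do.
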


\begin{proof}
	Define the parameter errors $\tilde \theta_i:=\hat \theta_i-\theta_i.$ The scalar error equations are given by
	$$
	\dot {\tilde \theta}_i(t)=-\gamma_i \Delta^2(t) \tilde\theta_i(t),
	$$
	whose explicit solution is 
	\begali{
		\lab{solctpee}
		\tilde \theta_i(t) = e^{-\gamma_i \int_0^t \Delta^2(s)ds} \tilde \theta_i(0).
	}
	Now, notice that the solution of \eqref{wi} is
	$$
	w_i(t)=e^{-\gamma_i \int_0^{t} \Delta^2(s)ds}.
	$$
	The key observation is that, using the equation above in \eqref{solctpee}, and rearranging terms we get that 
	\begequ
	\lab{keyrel}
	[1-w_i(t)]\theta_i= \hat \theta_i(t) - w_i(t) \hat \theta_i(0).
	\endequ
	Now, observe that $w_i(t)$ is a non-increasing function and, under the interval excitation assumption \eqref{sufexc}, we have that 
	\begequ
	\lab{wlc}
	w^{\tt c}_i(t)=w_i(t) < \mu_i<1,\;\forall t \geq t_c,
	\endequ
	Clearly, \eqref{keyrel} and \eqref{wlc} imply that 
	$$
	\frac{1}{1-w_c(t)}\left[\hat \theta(t)-w_c(t)\hat\theta(0)\right]=\theta,\;\forall t > t_c,
	$$
	completing the proof.
\end{proof} 

\subsection{DT FCT-DREM}
\lab{subsec22}
In  \cite[Proposition 2]{ORTetal_aut20} a DT DREM estimator was reported. We give below an FCT-version of this scheme  and give a brief summary of the proof. 

\begin{proposition} \em
	\label{pro2}
	Consider the scalar DT LREs  defined by \eqref{scalre} with the DT gradient-descent estimator   
	\begin{equation}
		\label{hatthe}
		\hat \theta_i(k+1)=\hat \theta_i(k)+\frac{{\Delta}(k)}{c_i+{\Delta}^2(k)} \left[ {\caly}_i (k+1) - {\Delta}(k)  \hat \theta_i(k)\right]
	\end{equation}
	with positive constants $c_i$. Define the dynamic extension
	\begin{equation}
		\label{wkplu1}
		w_i(k+1)= \frac{c_i}{c_i+{\Delta}^2(k)} w_i(k), \;\;w_i(0)=1.
	\end{equation}
	and the clipping function 
	\begequ
	\lab{clifun}
	w_i^c(k) = \left\{
	\begin{aligned}
		& \rho_i & \mbox{if $w_i(k)\in [\rho_i,1]$}\\
		&\\
		&w_i(k) &  \mbox{if $w_i(k)\in[0,\rho_i)$},
	\end{aligned}
	\right.
	\endequ
	where $\rho_i \in (0,1)$ are designer chosen constants. Assume there exists a $k_i^c \in (0,\infty)$ such that the IE condition
	\begequ
	\lab{intexcass}
	\prod_{i=0}^{k_i^c}\left[  \frac{c_i}{c_i +{\Delta}^2(i)} \right] < \rho_i,
	\endequ
	is satisfied. Then, 
	\begin{equation}
		\lab{thefct}
		\hat{\theta}_i^{\tt FCT}(k):= {1 \over 1 - w_i^c(k)}[ \hat \theta_i(k) - w_i^c(k)\hat \theta_i(0)],
	\end{equation}
	ensures
	$$
	{\hat \theta}_i^{\tt FCT}(k)=\theta_i,\;\forall k \geq k_i^c.
	$$
\end{proposition}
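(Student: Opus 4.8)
The plan is to follow the structure of the proof of Proposition~\ref{pro1}, replacing the differential error equation by its DT difference analog. First I would introduce the parameter error $\tilde\theta_i(k):=\hat\theta_i(k)-\theta_i$ and substitute the scalar LRE \eqref{scalre}, which in the one-step timing convention of \eqref{hatthe} reads $\caly_i(k+1)=\Delta(k)\theta_i$, into the innovation term of the estimator. Writing $\caly_i(k+1)-\Delta(k)\hat\theta_i(k)=-\Delta(k)\tilde\theta_i(k)$ and collecting terms, the estimator \eqref{hatthe} should collapse to the homogeneous recursion
\begin{equation*}
\tilde\theta_i(k+1)=\frac{c_i}{c_i+\Delta^2(k)}\,\tilde\theta_i(k).
\end{equation*}

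The key observation is that this recursion is \emph{identical} to the dynamic extension \eqref{wkplu1} governing $w_i$. Solving both explicitly from their respective initial conditions gives $w_i(k)=\prod_{j=0}^{k-1}\frac{c_i}{c_i+\Delta^2(j)}$ and, since $w_i(0)=1$, the error satisfies $\tilde\theta_i(k)=w_i(k)\tilde\theta_i(0)$. Rewriting the latter in terms of the estimates yields the DT counterpart of the key relation \eqref{keyrel},
\begin{equation*}
[1-w_i(k)]\theta_i=\hat\theta_i(k)-w_i(k)\hat\theta_i(0),
\end{equation*}
which is the heart of the argument.

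To conclude I would exploit monotonicity. Since $\frac{c_i}{c_i+\Delta^2(k)}\in(0,1]$, the sequence $w_i(k)$ is non-increasing; together with the IE condition \eqref{intexcass}, which states precisely that the product defining $w_i$ drops below $\rho_i$ at the threshold time, this guarantees $w_i(k)<\rho_i$ for all $k\ge k_i^c$. On that range the clipping function \eqref{clifun} returns $w_i^c(k)=w_i(k)$, so substituting the displayed key relation into \eqref{thefct} and cancelling the common, nonzero factor $1-w_i(k)$ gives $\hat\theta_i^{\tt FCT}(k)=\theta_i$, as claimed.

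I expect the main difficulty to be bookkeeping rather than conceptual. The delicate points are: (i) matching the time indices of $\caly_i$ and $\Delta$ so that the innovation reduces cleanly to $-\Delta(k)\tilde\theta_i(k)$; (ii) reconciling the off-by-one between the product index in \eqref{intexcass} and the argument of $w_i$ with the stated crossing time $k_i^c$; and (iii) noting that the clipping is exactly what keeps the denominator $1-w_i^c(k)$ bounded away from zero, equal to $1-\rho_i$ before excitation accumulates, so that \eqref{thefct} is well defined for all $k$ and the alertness-preservation mechanism is numerically sound.
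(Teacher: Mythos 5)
Your proposal is correct and follows essentially the same route as the paper's own proof: derive the homogeneous error recursion $\tilde\theta_i(k+1)=\frac{c_i}{c_i+\Delta^2(k)}\tilde\theta_i(k)$, observe that it coincides with the dynamics of $w_i$ so that $\tilde\theta_i(k)=w_i(k)\tilde\theta_i(0)$, rearrange into the key relation $[1-w_i(k)]\theta_i=\hat\theta_i(k)-w_i(k)\hat\theta_i(0)$, and invoke monotonicity plus the IE condition to conclude $w_i^c(k)=w_i(k)<\rho_i$ for $k\geq k_i^c$, whence \eqref{thefct} recovers $\theta_i$ exactly. If anything, your bookkeeping (product index running to $k-1$, distinct dummy index $j$) is cleaner than the paper's, which reuses $i$ as both the parameter and product index in \eqref{psik}.
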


\vspace{0.3cm}

\begin{proof}
	From,   \eqref{scalre} and \eqref{hatthe} we get the parameter error equation 
	\begin{equation}
		\label{dtilthe}
		\tilde \theta_i(k+1)= \frac{c_i}{c_i +{\Delta}^2(k)} \tilde \theta_i(k)
	\end{equation}  
	whose explicit  solution satisfies 
	\begin{equation}
		\label{tilthe}
		\tilde \theta_i(k)= \psi_i(k) \tilde \theta_i(0),
	\end{equation}
	where, for ease of future reference, we defined the scalar sequence
	\begequ
	\lab{psik}
	\psi_i(k):=\prod_{i=0}^{k}\left[  \frac{c_i}{c_i +{\Delta}^2(i)} \right]
	\endequ
	The solution of \eqref{wkplu1}  is given by
	\begequ
	\lab{solwk}
	w_i(k)= \psi_i(k),
	\endequ
	whose replacement in \eqref{tilthe} yields
	$$
	\tilde \theta_i(k)=w_i(k)\tilde \theta_i(0). 
	$$
	Using the definition of the parameter error and rearranging terms we get that
	\begequ
	\lab{fctdt}
	[1 - w_i(k)]\theta_i = \hat \theta_i(k) - w_i(k)\hat \theta_i(0).
	\endequ
	According to \eqref{clifun} we have that, under the assumption \eqref{intexcass}, $w_i(k)<\rho_i<1$ for all $k \geq k_i^c$, Consequently, for $k \geq k_i^c$ we can write
	$$
	\theta_i ={1 \over 1 - w_i(k)}[ \hat \theta_i(k) - w_i(k)\hat \theta_i(0)].
	$$
	The proof is completed, from \eqref{thefct}, noting that $w_i(k)=w_i^c(k)$ for all $k \geq k_i^c$.
\end{proof}
\section{NEW ESTIMATOR WITH FCT ALERTNESS PRESERVATION}
\lab{sec3}
%%%%%%%%%%%%%%%
%
There are two practical problems with the approach described above. First, the estimates at the current time are reconstructed from the knowledge of the {\em initial} estimate $\hatthe_i(0)$, complicating the task of tracking variations of the true parameters after convergence of the estimates. Second,  {\em independently} of the behaviour of $\Delta$, the functions $w_i$ are monotonically non-increasing and converge to zero if $\Delta$ is not square summable (or integrable). In this case, $\hat \theta_i^{\tt FCT} \to \hat \theta_i$, and the FCT estimator converges to a standard gradient, losing its FCT feature.  Therefore, to keep the {\em FCT alertness} of the estimator, {\em i.e.}, to track parameter variations in finite-time upon the arrival of new excitation, it is necessary to reset the estimators---a modification that is always problematic to implement. A typical procedure is the so-called {\em covariance resetting} for least-squares algorithms, see \cite[Section 2.4.2]{SASBODbook}.

These drawbacks can be overcome with the new FCT-DREM estimators proposed below.
\subsection{CT FCT-DREM with alertness preservation}
\lab{subsec31}
For the sake of brevity, we present only the derivation of a relation similar to \eqref{keyrel}, from which we can easily construct the FTC estimator and prove that the new FTC estimator does not converge to the gradient one.

\begin{proposition}\em
	\lab{pro3}
	Fix $T_{\tt D} \in \rea_{> 0}$ and define 
	\begequ
	\lab{dotwd}
	\dot w_i^{\tt D}(t)=-\gamma_i\left[\Delta^2(t)-\Delta^2(t-T_{\tt D})\right]w_i^{\tt D}(t),\;w^{\tt D}_i(0)=1.
	\endequ
	Then,
	\begequ
	\lab{ttt}
	\left[1-w_i^{\tt D}(t)\right]\theta_i = \hatthe_i(t)-w_i^{\tt D}(t)\hatthe_i(t-T_{\tt D}).
	\endequ
	Moreover, $w_i^{\tt D}(t)$ is {\em bounded away from zero}.
\end{proposition}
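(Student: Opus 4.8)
The plan is to solve the defining ODE \eqref{dotwd} in closed form and exploit the sliding-window structure that emerges. First I would integrate \eqref{dotwd}, which is linear in $w_i^{\tt D}$, to obtain
\begequ
\lab{mysol}
w_i^{\tt D}(t)=\exp\left(-\gamma_i\int_0^t\left[\Delta^2(s)-\Delta^2(s-T_{\tt D})\right]ds\right).
\endequ
The crucial step is to simplify the exponent: shifting the integration variable in the delayed term via $u=s-T_{\tt D}$ and adopting the standard start-up convention that signals vanish before $t=0$, i.e.\ $\Delta(s)=0$ for $s<0$, the two integrals telescope and, for $t\geq T_{\tt D}$, collapse to a single integral over the window of width $T_{\tt D}$,
\begequ
\lab{mywd}
w_i^{\tt D}(t)=\exp\left(-\gamma_i\int_{t-T_{\tt D}}^t\Delta^2(s)ds\right).
\endequ
(For $t<T_{\tt D}$ the delayed term contributes nothing and $w_i^{\tt D}(t)=w_i(t)$, recovering the solution of Proposition~\ref{pro1}.)

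With \eqref{mywd} in hand, the key relation \eqref{ttt} follows almost mechanically. I would recall the explicit parameter-error solution \eqref{solctpee} and split its exponent at the window boundary, writing $\int_0^t\Delta^2=\int_{t-T_{\tt D}}^t\Delta^2+\int_0^{t-T_{\tt D}}\Delta^2$; the first factor is exactly $w_i^{\tt D}(t)$ of \eqref{mywd}, while the second reconstructs $\tilde\theta_i(t-T_{\tt D})$, so that $\tilde\theta_i(t)=w_i^{\tt D}(t)\,\tilde\theta_i(t-T_{\tt D})$. Substituting $\tilde\theta_i=\hatthe_i-\theta_i$ and collecting the terms proportional to $\theta_i$ then reproduces \eqref{ttt} verbatim.

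For the final claim I would read the lower bound off \eqref{mywd} directly. Because the exponent is now an integral over an interval of \emph{fixed} length $T_{\tt D}$, boundedness of the regressor---$\Delta\in\Linf$, which holds whenever the original regressor $\phi$ is bounded---gives $\int_{t-T_{\tt D}}^t\Delta^2(s)ds\leq T_{\tt D}\sup_{s\geq 0}\Delta^2(s)$ uniformly in $t$, whence $w_i^{\tt D}(t)\geq\exp\left(-\gamma_i T_{\tt D}\sup_{s\geq 0}\Delta^2(s)\right)>0$. This is precisely the mechanism that restores alertness: unlike $w_i$, whose exponent $\int_0^t\Delta^2$ is non-decreasing and drives $w_i\to0$ when $\Delta\notin\L2$, the windowed exponent forgets past excitation and cannot accumulate.

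The step I expect to require the most care is the bookkeeping of initial conditions, namely the regime $t<T_{\tt D}$ where the delayed quantities $\Delta(t-T_{\tt D})$ and $\hatthe_i(t-T_{\tt D})$ are evaluated at negative times. The clean way to handle this is the start-up convention $\Delta(s)=0$ and $\hatthe_i(s)=\hatthe_i(0)$ for $s<0$; under it the telescoping in \eqref{mysol}--\eqref{mywd} is exact, the initial condition $w_i^{\tt D}(0)=1$ in \eqref{dotwd} is consistent, and \eqref{ttt} degenerates gracefully to the Proposition~\ref{pro1} identity \eqref{keyrel} on $[0,T_{\tt D}]$.
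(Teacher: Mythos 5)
Your proposal is correct and takes essentially the same route as the paper's proof: both rest on the windowed solution $w_i^{\tt D}(t)=e^{-\gamma_i\int_{t-T_{\tt D}}^t\Delta^2(s)ds}$, the error propagation $\tilde\theta_i(t)=w_i^{\tt D}(t)\tilde\theta_i(t-T_{\tt D})$ obtained from \eqref{solctpee} over the interval $[t-T_{\tt D},t]$, and the bound $\int_{t-T_{\tt D}}^t\Delta^2(s)ds\leq \Delta^2_{\max}T_{\tt D}$ for the claim that $w_i^{\tt D}$ is bounded away from zero. The only difference is one of detail: you derive the windowed solution explicitly by telescoping and spell out the start-up convention for $t<T_{\tt D}$, whereas the paper asserts these steps directly under the assumption $\Delta(t-T_{\tt D})=0$ for $t<T_{\tt D}$.
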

\begin{proof}
	Without loss of generality we assume that $\Delta(t-T_{\tt D})=0$ for $t < T_{\tt D}$. Then, the solution of \eqref{dotwd} is
	\begequ
	\lab{solwd}
	w_i^{\tt D}(t)=e^{-\gamma_i\int_{t-T_{\tt D}}^t \Delta^2(s)ds}.
	\endequ
	From \eqref{solwd}, and the fact that 
	$$
	\int_{t-T_{\tt D}}^t \Delta^2(s)ds \leq \Delta^2_{\max}T_{\tt D},
	$$
	where $\Delta_{\max} \geq \|\Delta(t)||_\infty$, we conclude that 
	$$
	w_i^{\tt D}(t) \geq e^{-\gamma_i \Delta^2_{\max} T_{\tt D}} >0.
	$$
	Now, from the solution of the parameter error equation \eqref{solctpee} in the interval $[t-T_{\tt D},t]$ we get
	$$
	\tilde \theta_i(t) = e^{-\gamma_i \int_{t - t_{\tt D}}^t \Delta^2(s)ds} \tilde \theta_i(t - T_{\tt D}).
	$$
	Hence, $\tilthe_i(t)=w_i^{\tt D}(t)\tilthe_i(t-T_{\tt D}).$ The proof of the claim is established rearranging the terms of the equation above.
	
\end{proof}

\begrem
\lab{rem1}
It is important to note that when $\Delta(t)$ decreases---that is, when we loose excitation---$w_i^{\tt D}(t)$ grows towards one, and the alertness in not lost. On the other hand, when new excitation arrives and $\Delta(t)$ grows, then $w_i^{\tt D}(t)$ decays and the FCT condition $w_i^{\tt D}(t)<\mu_i$ is satisfied. In this way, the new FTC estimator preserves its FTC property if the parameters change. This fact is illustrated in the simulations of Section \ref{sec4}. Notice also that, in contrast with the FCT estimator of Proposition \ref{pro1}, where the calculation of the $\hat \theta^{\tt FCT}_i$ in \eqref{hatthew} is done using the {\em initial} parameter estimate $\hat \theta_i(0)$, the FCT reconstruction of the estimated parameter is done in \eqref{ttt} using the estimate at time $t-T_{\tt D}$, that is, $\hatthe_i(t-T_{\tt D})$.
\endrem

\begrem
\lab{rem2}
For the new FTC DREM estimator the interval excitation inequality becomes the existence of a time $t_i^c \geq T_{\tt D}$ such that
\begequ
\lab{inttcd}
\int^{t_i^c}_{t_i^c-T_{\tt D}} \Delta^2(s) ds \geq -\frac{1}{\gamma_i}\ln(1-\mu_i).
\endequ
Recalling \eqref{solwd}, it has the same interpretation as \eqref{sufexc}.
\endrem  
\subsection{DT FCT-DREM with alertness preservation}
\lab{subsec32}
Similarly to the CT case, for the sake of brevity, we present only the derivation of a relation similar to \eqref{fctdt}, from which we can easily construct the FTC estimator and prove that the new FTC estimator does not converge to the gradient one.

\begin{proposition}\em 
	\lab{pro4}
	Fix a positive integer ${\tt d}$. Consider the DT, scalar LRE \eqref{scalre} and the gradient parameter update \eqref{hatthe} with the dynamic extension \eqref{wkplu1} 
	and the sequence
	\begali{
		\lab{wdk}
		w_i^{\tt d}(k)&={w_i(k) \over w_i(k-{\tt d}-1)}.
	}
	Then,
	$$
	[1 - w_i^{\tt d}(k)]\theta_i = \hat \theta_i(k) - w_i^{\tt d}(k)\hat \theta_i(k-{\tt d}).
	$$
\end{proposition}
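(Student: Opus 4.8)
The plan is to follow the discrete-time argument of Proposition~\ref{pro2} verbatim, except that the error is propagated over a \emph{sliding window} of length~${\tt d}$ instead of from the fixed initial instant $k=0$. This is the exact discrete counterpart of the continuous-time construction of Proposition~\ref{pro3}, in which the windowed ratio $w_i^{\tt D}(t)=w_i(t)/w_i(t-T_{\tt D})$ took the place of the single decaying exponential of Proposition~\ref{pro1}. First I would recall the scalar error recursion \eqref{dtilthe}, which is valid irrespective of any windowing,
\[
\tilde\theta_i(k+1)=\frac{c_i}{c_i+\Delta^2(k)}\,\tilde\theta_i(k),
\]
and observe that it is a first-order linear recursion with a strictly positive, time-varying gain lying in $(0,1]$.

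Next I would iterate this recursion ${\tt d}$ times, starting from the instant $k-{\tt d}$, so as to express the current error through the error one window earlier,
\[
\tilde\theta_i(k)=\left(\prod_{j=k-{\tt d}}^{k-1}\frac{c_i}{c_i+\Delta^2(j)}\right)\tilde\theta_i(k-{\tt d}).
\]
The crucial step is to recognise this finite product as the sequence $w_i^{\tt d}(k)$ of \eqref{wdk}. Invoking the closed form \eqref{solwk} of the dynamic extension \eqref{wkplu1}, the ratio of the two products telescopes,
\[
\frac{w_i(k)}{w_i(k-{\tt d})}=\prod_{j=k-{\tt d}}^{k-1}\frac{c_i}{c_i+\Delta^2(j)},
\]
so that the windowed product above is exactly $w_i^{\tt d}(k)$ and hence $\tilde\theta_i(k)=w_i^{\tt d}(k)\,\tilde\theta_i(k-{\tt d})$. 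Substituting the definition $\tilde\theta_i=\hat\theta_i-\theta_i$ and rearranging then reproduces the claimed identity $[1-w_i^{\tt d}(k)]\theta_i=\hat\theta_i(k)-w_i^{\tt d}(k)\hat\theta_i(k-{\tt d})$, precisely as \eqref{fctdt} was obtained in the proof of Proposition~\ref{pro2}.

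I expect the only real obstacle to be bookkeeping: the index range surviving the telescoping cancellation in $w_i(k)/w_i(k-{\tt d})$ must contain exactly the ${\tt d}$ factors $j=k-{\tt d},\dots,k-1$ that the error recursion produces, and this forces the shift in the denominator of \eqref{wdk} to agree with the argument shift of $\hat\theta_i(\cdot)$ in the final identity. Hence I would pin down the convention for the solution \eqref{solwk}---whether its product carries $k$ or $k-1$ as its upper limit---and then verify the alignment on the small cases ${\tt d}=1$ and on the boundary instants $k\le{\tt d}$, where the convention $\Delta(\cdot)=0$ for negative arguments (as already used in \eqref{solwd}) fixes the initialisation. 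Finally, although it is not part of the stated identity, it is worth noting in the same breath---mirroring the boundedness claim of Proposition~\ref{pro3} and Remark~\ref{rem1}---that $w_i^{\tt d}(k)$ is a product of only ${\tt d}$ factors, each bounded below by $c_i/(c_i+\Delta_{\max}^2)>0$ once $\Delta$ is bounded; consequently $w_i^{\tt d}(k)$ stays bounded away from zero for all $k$, which is exactly what prevents the collapse $w_i^{\tt d}\to0$ suffered by the non-increasing $w_i(k)$ of Proposition~\ref{pro2} and thereby preserves the alertness of the estimator.
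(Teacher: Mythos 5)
Your proposal is correct and follows essentially the same route as the paper's own proof: iterate the error recursion \eqref{dtilthe} over the sliding window, recognise the resulting finite product as a telescoping ratio of the $w_i$ sequence via its closed-form solution, and rearrange to obtain the claimed identity---exactly the argument of Proposition~\ref{pro2} with the window replacing the initial time, mirroring Proposition~\ref{pro3}. The off-by-one bookkeeping you flag is genuine and sits in the paper itself: the solution formulas \eqref{psik}--\eqref{solwk} carry upper limit $k$ where the recursions \eqref{hatthe}, \eqref{wkplu1} actually give $k-1$, which is why the definition \eqref{wdk} uses $w_i(k-{\tt d}-1)$ in the denominator while the error propagation yields the ratio $w_i(k)/w_i(k-{\tt d})$; your resolution (pinning the convention so the ${\tt d}$ factors $j=k-{\tt d},\dots,k-1$ match the delayed estimate $\hat\theta_i(k-{\tt d})$) is the consistent one, and your added lower bound $w_i^{\tt d}(k)\geq \left[c_i/(c_i+\Delta^2_{\max})\right]^{\tt d}>0$ makes explicit the alertness-preservation claim the paper only asserts.
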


\vspace{0.3cm}

\begin{proof}
	First, we make the observation that, for all finite $k$, the sequence $w_i(k)$ is bounded away from zero. Replacing the solution of $w_i(k)$---given by \eqref{psik} and \eqref{solwk}---in \eqref{wdk} it is clear that
	$$
	w_i^{\tt d}(k)=\prod_{i=k-{\tt d}}^{k}\left[  \frac{c_i}{c_i +{\Delta}^2(i)} \right]
	$$ 
	Hence, replacing the equation above in \eqref{tilthe} we have that
	$$
	\tilthe_i(k)=w_i^{\tt d}(k)\tilthe_i(k-{\tt d}).
	$$
	The proof is completed rearranging the terms of the identity above.
\end{proof}

\begrem
\lab{rem3}
It is clear that the same behavior that is indicated in Remark \ref{rem1} regarding the CT version is observed in this DT one---explaining the important FCT-AP property. 
\endrem

%
%%%%%%%%%%%%%%%%%%%%%%%%%%%%%%%

\section{SIMULATIONS}
\lab{sec4}
%%%%%%%%%%%%%%%%%%%%%%%
%
In this section we present simulations illustrating the results of Propositions \ref{pro1}-\ref{pro4} for a single parameter. Moreover, for the sake of comparison, we show some simulation results of the high-gain FCT estimator proposed in \cite{WANetal_tac20}.
\subsection{CT FCT-DREM with alertness preservation}
\lab{subsec41}
In this subsection we compare the FCT DREM of  Proposition \ref{pro1} and the new FCT DREM of Proposition \ref{pro3}. The objective of the simulation is to prove that the new FTC DREM is able to react when new excitation arrives. This is in contrast with the old FCT DREM estimator that, since $w_i(t) \to 0$, converges to the gradient estimator and loses its FTC alertness property. 

We consider two scenarios: with and without {\em excitation} in $\Delta(t)$. For the first case we consider the PE signal $\Delta(t) = \sin({\pi \over 10} t)$, and for the second one $\Delta(t) = \frac{1}{\sqrt{t+1}}$. Note that in the second case $\Delta(t)\to 0$, hence it is not PE. However, $\Delta(t) \not \in \mathcal{L}_2$, hence it satisfies the conditions for convergence of the DREM estimator \cite{ARAetal_tac17,ORTetal_tac20}. 

To illustrate the FTC tracking capabilities of the estimators the unknown parameter $\theta$ is {\em time-varying} and given by
\[ 
\theta(t) = \begin{cases}
	10 \text{ for } 0\le t <10, \\
	15 \text{ for } 10\le t <20, \\		
	15-0.5(t-20) \text{ for } 20\le t <30, \\		
	10 \text{ for } t>30, \\
\end{cases}
\]
{\em i.e.}, it starts at $10$, jumps to $15$ at $t=10$, and then linearly returns to $10$.

For the simulations we set $\gamma = 2$, $\mu=0.98$, and $T_{\tt D}=0.2$. These parameters have been chosen such that the transients of both FCT estimators coincide in the ideal case when $\theta$ is constant and the system is excited. 

The transient of the estimators for  $\Delta(t) =  \sin({\pi \over 10} t)$ are given in Fig. \ref{fig:fig1_CT}, where we plot the time-varying parameter $\theta$, the gradient estimate $\hat \theta^{grad}(t)$, as well as the old and the new FCT estimates, denoted in the plots as $\hat \theta^{\tt FCT}(t)$  and $\hat \theta^{\tt FCT-D}(t)$, respectively.  We observe that, as expected, in the time interval $[0,10]$ both FCT estimators are overlapped and converge in finite time, while the gradient converges only asymptotically. The difference between the old FCT estimator and the new one is clearly appreciated in the time interval $[10,20]$, where we see that the new FCT estimator tracks the parameter variation in {\em finite time}, while the old one---now glued to the gradient---only does it asymptotically.  

\begin{figure}[tb]
	\centering
	\includegraphics[width=1\linewidth]{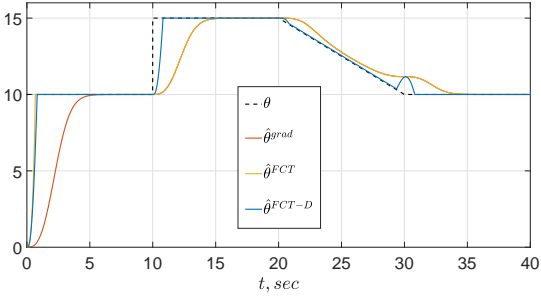}
	\caption{Transients of the CT parameter estimates with $\Delta(t)=\sin({\pi \over 10} t)$.}
	\label{fig:fig1_CT}
\end{figure}

\begin{figure}[tb]
	\centering
	\includegraphics[width=1\linewidth]{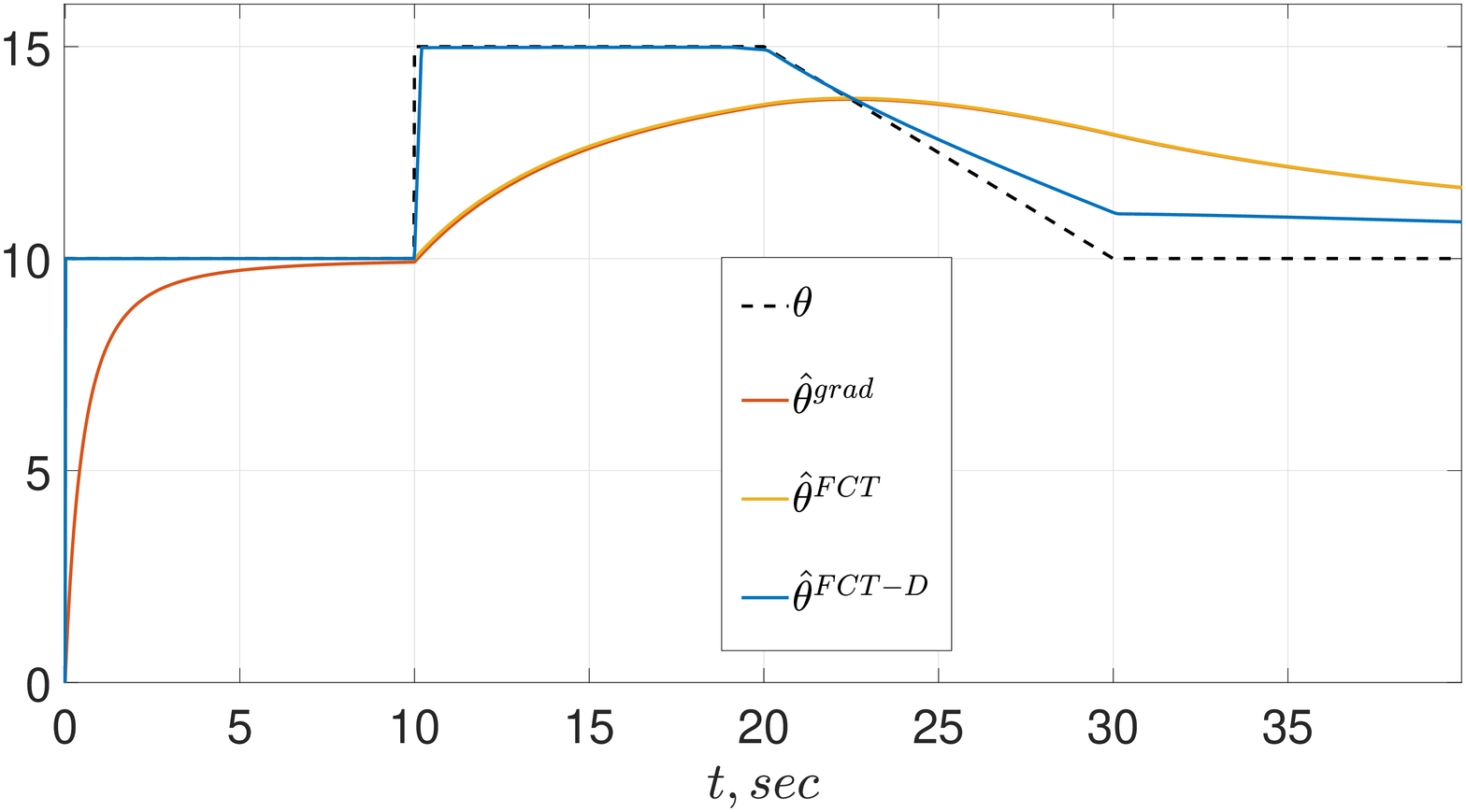}
	\caption{Transients of the CT parameter estimates with $\Delta(t)=\dfrac{1}{\sqrt{t+1}}$.}
	\label{fig:fig2_CT}
\end{figure}

The behavior of the CT estimators for $t \in [20,40]$ shows that, as predicted by the theory, the old FCT behaves as the gradient estimator and their trajectories coincide. On the other hand, the new estimator preserves FCT alertness after the first parameter jump and achieves fast tracking of the linearly time-varying $\theta(t)$. We also observe in the figure a blip in the estimates at $t=30$ that coincides with the time of freezing of the true parameter. 

For the non-PE case of $\Delta=1/\sqrt{t+1}$, the transients of the CT estimators are given in Fig. \ref{fig:fig2_CT}. We observe that both FCT estimators, again, essentially coincide in the first few seconds and converge in finite time, while the gradient does it only asymptotically. After the first parameter change at $t=10$ the old FTC and the gradient coincide, while the new FCT manages to track in finite time the parameter jump. However, during the ramp parameter change---because of the lack of excitation---neither one of the estimators can track the parameter variation but the new FCT estimator performs much better.
\subsection{DT FCT-DREM with alertness preservation}
\lab{subsec42}
In this subsection we present the simulation results for the DT estimators of Propositions \ref{pro2} and \ref{pro4}.  The estimator gains were set to $\gamma = 2$, $c=1$, $d=1$, $T_{\tt D}=1$ and $T=0.5$.

The same simulation scenario of the CT schemes given above is reproduced here and the transient behaviors are shown in Figs. \ref{fig:fig1_DT} and \ref{fig:fig2_DT}. Essentially the same remarks made for the CT schemes of the previous subsection are applicable in the DT case.

\begin{figure}[tb]
	\centering
	\includegraphics[width=1\linewidth]{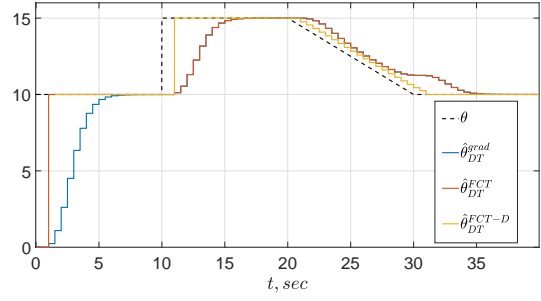}
	\caption{Transients of the DT parameter estimates with $\Delta(t)=\sin({\pi \over 10}t)$.}
	\label{fig:fig1_DT}
\end{figure}

\begin{figure}[tb]
	\centering
	\includegraphics[width=1\linewidth]{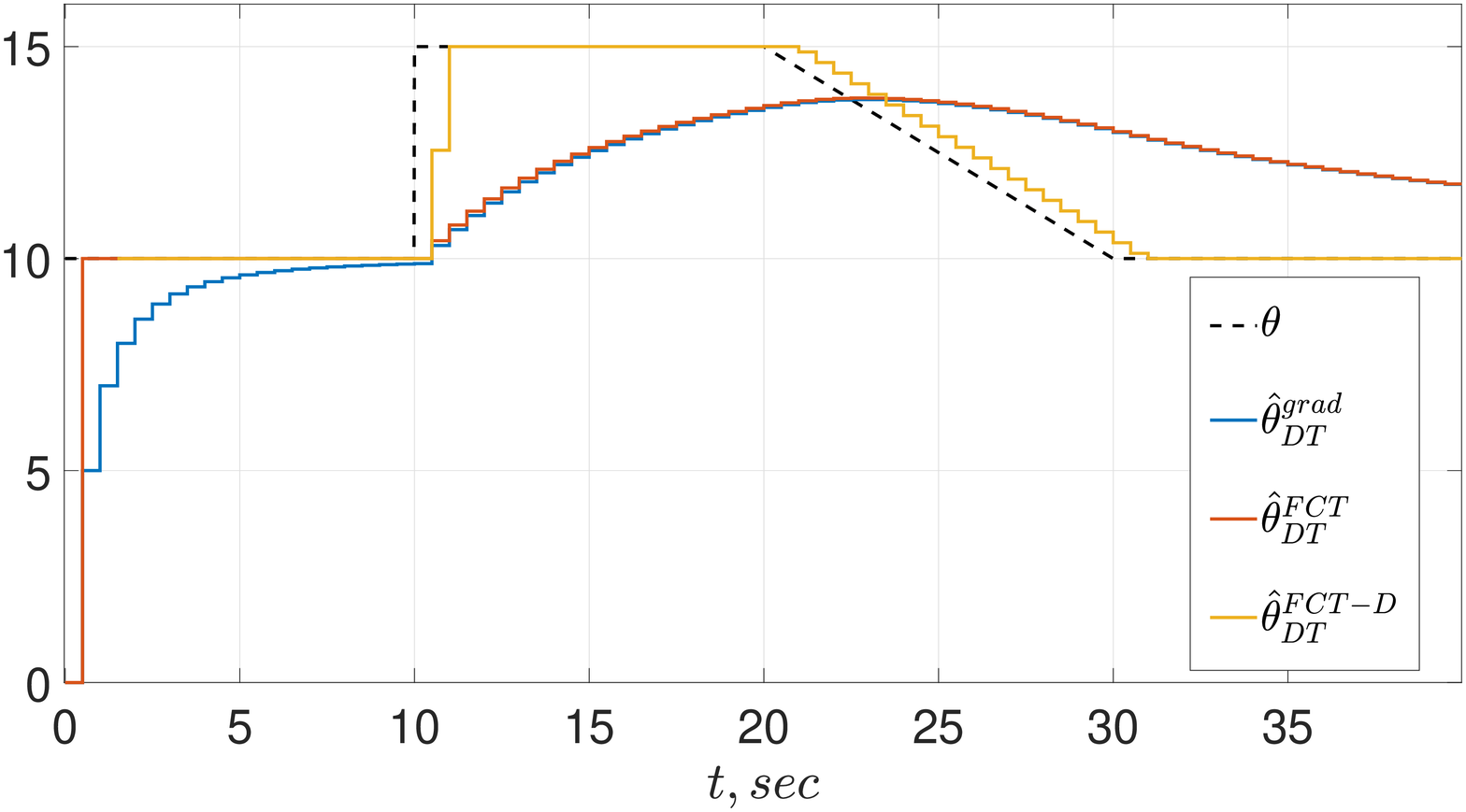}
	\caption{Transients of the DT parameter estimates with $\Delta(t)=\dfrac{1}{\sqrt{t+1}}$.}
	\label{fig:fig2_DT}
\end{figure}
\subsection{Comparison of the CT FCT-DREM with two schemes of  \cite{WANetal_tac20}}
\lab{subsec43}
Now, we compare the FCT-D algorithm with two of the schemes proposed in \cite{WANetal_tac20}. Namely,

\begin{enumerate}
	\item Algorithm 1
	\begin{align}
		\label{ef_9}
		\dot{\hat{\theta}}(t)=\gamma \Delta(t) \lceil \caly(t)-\Delta(t)\hat{\theta}(t) \rfloor ^\alpha
	\end{align}
	where $\gamma>0$ and $\alpha \in \left[ 0, 1\right) $.
	\item Algorithm 3
	\begin{align}
		\label{ef_13}
		\dot{\hat{\theta}}(t)=\gamma \sign(\Delta(t)) \lceil \caly(t)-\Delta(t)\hat{\theta}(t) \rfloor ^{\frac{\left|\Delta(t) \right| }{\varsigma \Delta_{max}}}
	\end{align}
	where $\gamma > 0$, $\varsigma > 1$, $\Delta_{max}=\max_{t\in \left[ -T^0,T^0+T \right] \left| \Delta(t) \right| }$ for given $T^0 \in \rea_+$ and $\hat{\theta}(t_0)=0$.
\end{enumerate}

It should be underscored in  \cite{WANetal_tac20} a third estimator was also proposed but this scheme is not well-defined when $\Delta(t)=0$ and could not be simulated.

For the simulation of the CT FCT-D we set $\gamma = 2$, $\mu=0.98$, and $T_{\tt D}=0.2$, while for Algorithm 1 \eqref{ef_9} we set $\gamma=5$ and $\alpha=0.75$.
and for Algorithm 3 \eqref{ef_13} we set  $\gamma = 5$, $\varsigma = 2$.

The same simulation scenario of the CT schemes given in Subsection \ref{subsec41} is reproduced here and the transient behaviors are shown in Figs. \ref{fig:ef_1} and \ref{fig:ef_2}. As shown in the figures Algorithm 3 indeed achieves FCT and preserves its alertness. However, in spite of the theoretical analysis reported in \cite{WANetal_tac20} the estimator of Algorithm 1 tracks only asymptotically.

To test the sensitivity of the algorithms to the presence of noise, we repeated the simulations adding a signal $0.1\sin(10t)$ to the measurement $\caly(t)$. The results of the simulations are shown in Figs. \ref{fig:ef_1_noise} and \ref{fig:ef_2_noise}. Interestingly, the behavior of Algorithm 1 does no seem to be affected by the noise---but, as before, the tracking is only asymptotic. For the PE case of Fig \ref{fig:ef_1_noise} we see only a minor performance degradation due to the noise in all schemes that is further degraded for the non-PE case of  Fig \ref{fig:ef_2_noise}. 

\begin{figure}[tb]
	\centering
	\includegraphics[width=1\linewidth]{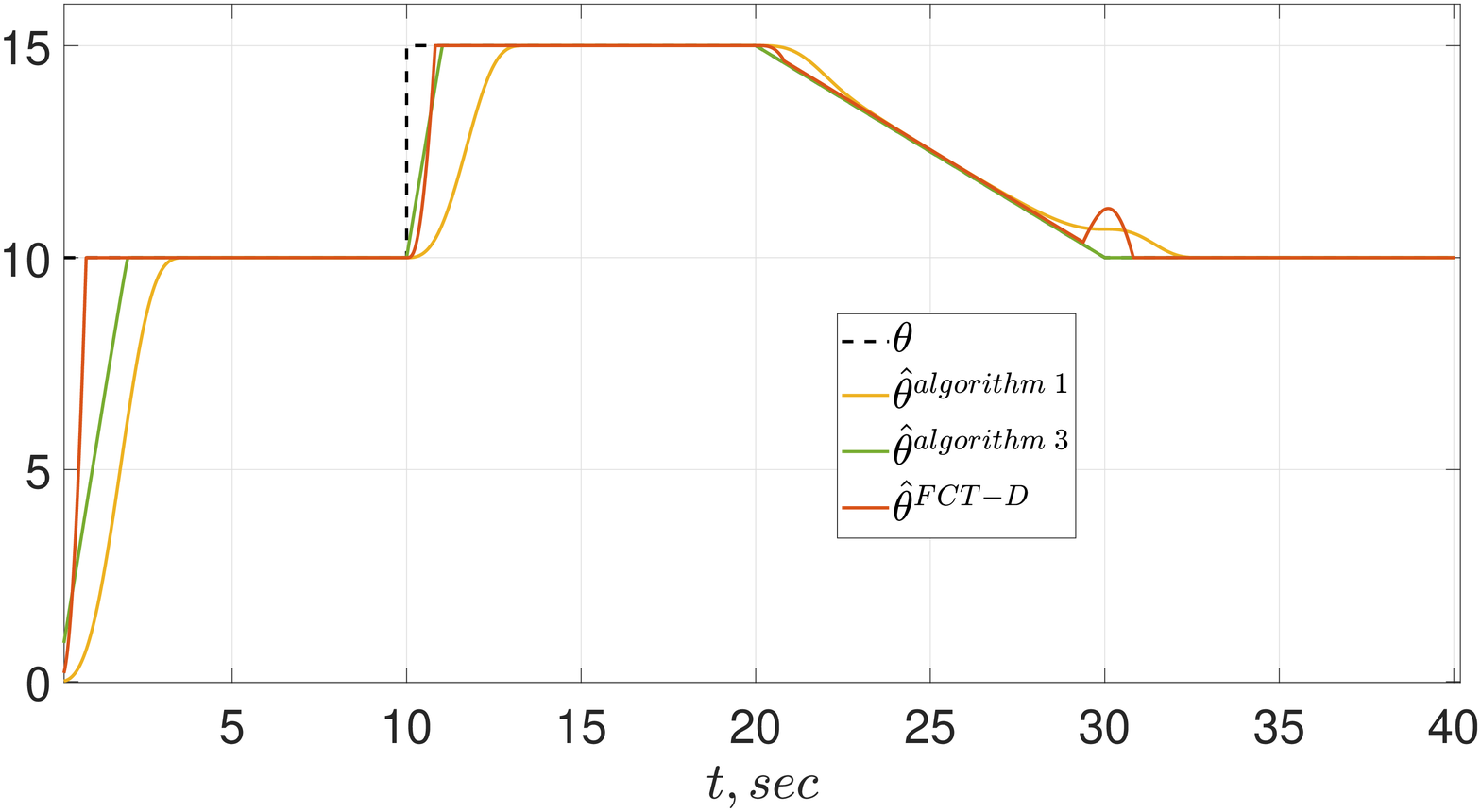}
	\caption{Transients of the parameter estimates with $\Delta(t)=\sin({\pi \over 10} t)$ for the CT FCT-D estimator and Algorithms 1 and 3 of  \cite{WANetal_tac20}. }
	\label{fig:ef_1}
\end{figure}

\begin{figure}[tb]
	\centering
	\includegraphics[width=1\linewidth]{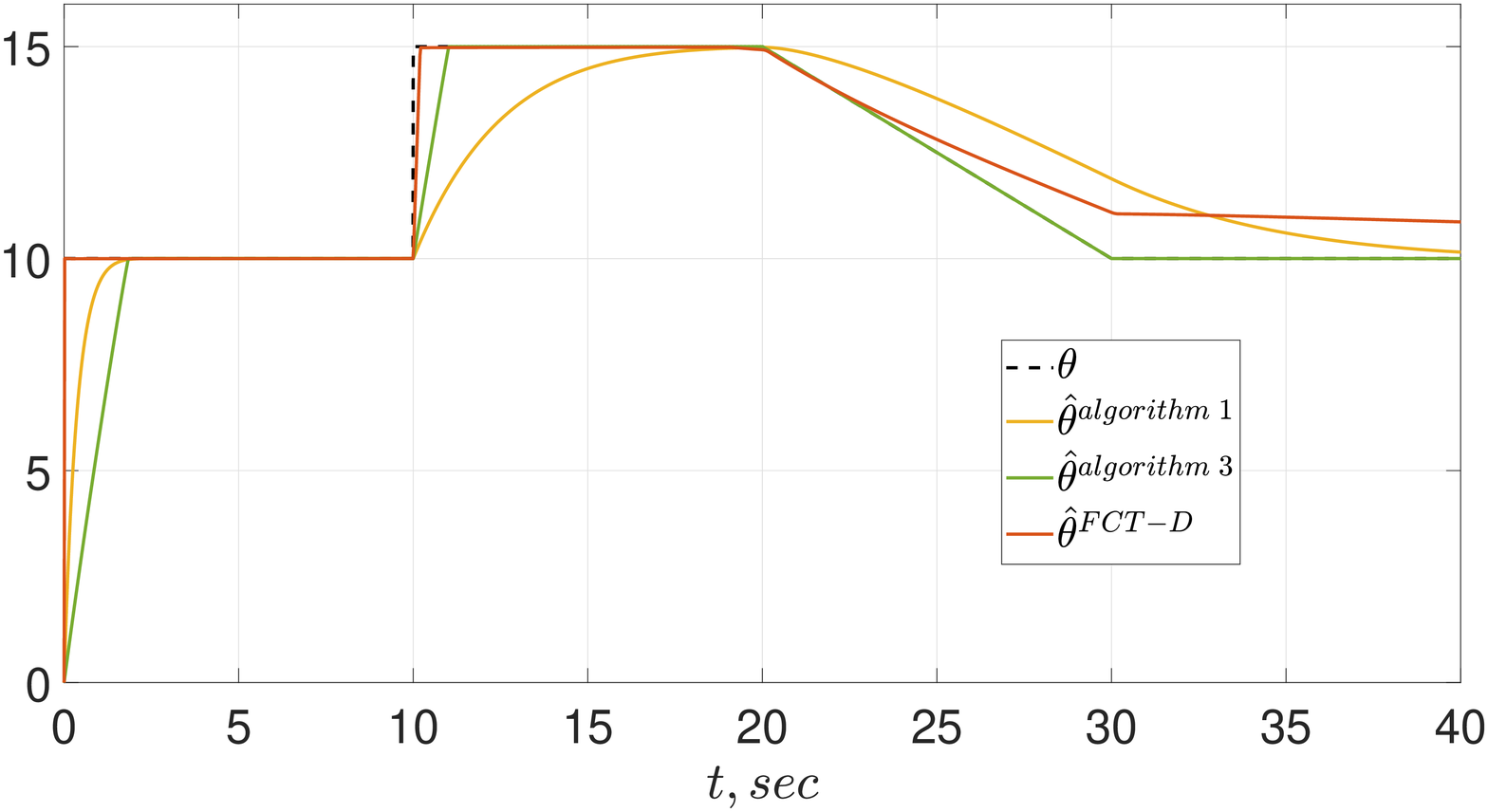}
	\caption{Transients of the parameter estimates with $\Delta(t)=\dfrac{1}{\sqrt{t+1}}$ for the CT FCT-D estimators and Algorithms 1 and 3 of \cite{WANetal_tac20}.}
	\label{fig:ef_2}
\end{figure}

\begin{figure}[tb]
	\centering
	\includegraphics[width=1\linewidth]{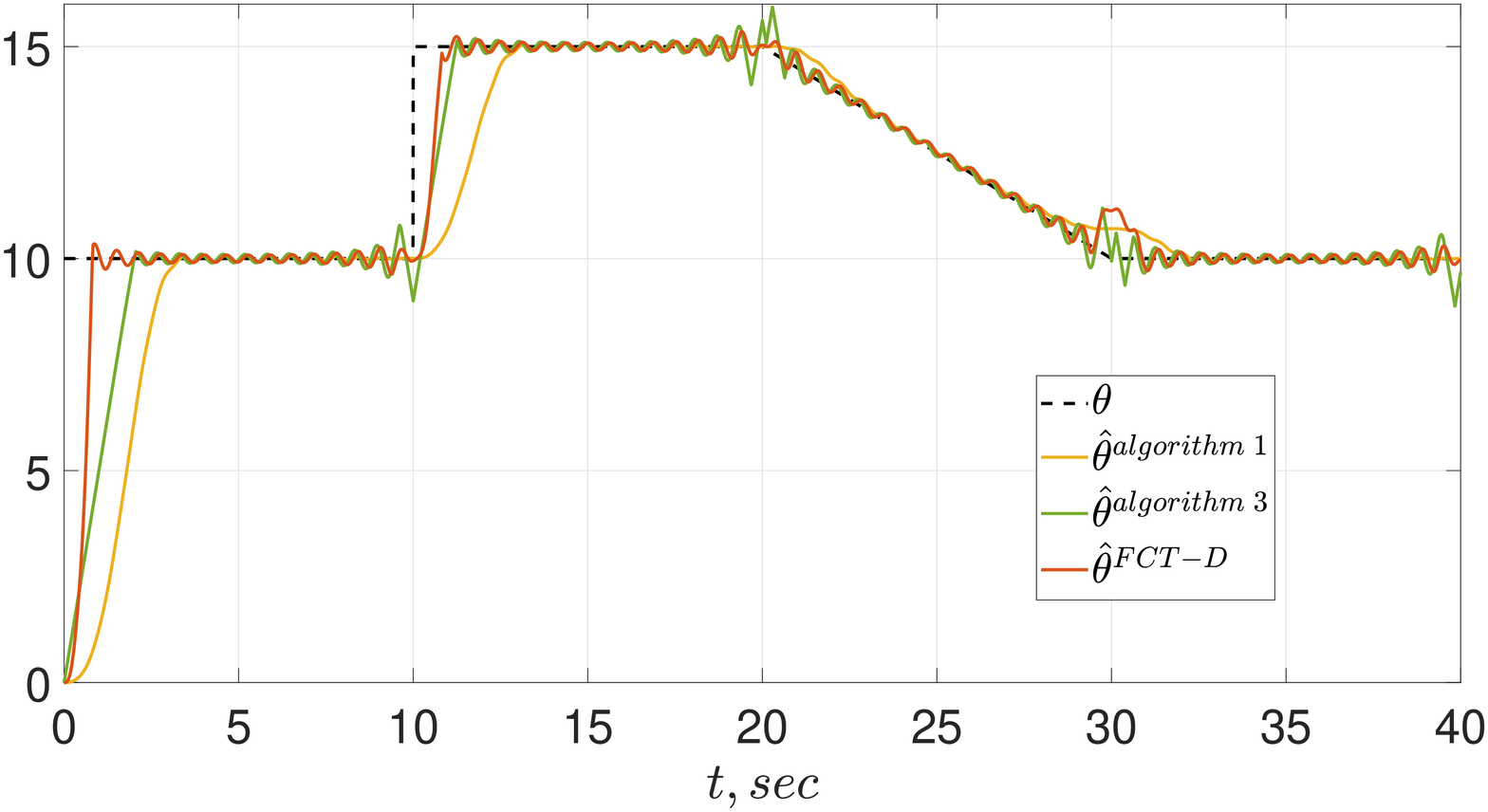}
	\caption{Transients of the parameter estimates with $\Delta(t)=\sin({\pi \over 10}t)$ for the CT FCT-D estimator and Algorithms 1 and 3 of \cite{WANetal_tac20} with noise.}
	\label{fig:ef_1_noise}
\end{figure}

\begin{figure}[tb]
	\centering
	\includegraphics[width=1\linewidth]{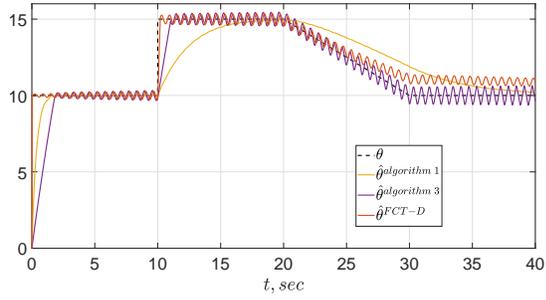}
	\caption{Transients of the parameter estimates with $\Delta(t)=\dfrac{1}{\sqrt{t+1}}$ for the CT FCT-D observer and Algorithms 1 and 3 of  \cite{WANetal_tac20}.}
	\label{fig:ef_2_noise}
\end{figure}

%
%%%%%%%%%%%%%%%%%%%%%%%%%%%%%%%
\section{CONCLUSIONS}
\lab{sec5}
%%%%%%%%%%%%%%%%%%%%%%%
%
We have presented two new FCT-DREM parameter estimators with enhanced performance---in particular, with respect to their ability to track parameter variations in {\em finite-time}. CT and DT versions of the new estimators are given. The performance improvement of the proposed schemes was illustrated with representative simulations.    

\appendix

\begin{table}[h]
	\centering
	\caption{List of Acronyms}
	\label{tab:2}
	\renewcommand\arraystretch{1.6}
	\begin{tabular}{l|r}
		\hline\hline
		AP & Alertness preservation \\
		CT &  Continuous-time  \\
		DREM  &  Dynamic regressor extension and mixing \\
		DT  & Discrete-time \\
		FCT & Finite-convergence time \\
		IE & Interval excitation \\
		LRE & Linear regressor equation \\
		\hline\hline
	\end{tabular}
\end{table}


\begin{thebibliography}{100}
	
	\bibitem{ARAetal_tac17}
	S. Aranovskiy, A. Bobtsov, R. Ortega and A. Pyrkin, Performance enhancement of parameter estimators via dynamic regressor extension and mixing, {IEEE Transactions on Automatic Control}, vol. 62, no. 7, pp.~3546-3550, 2017.
	
	\bibitem{CHOetal_tac18}
	N. Cho, H. Shin, Y. Kim and A. Tsourdos, Composite MRAC with parameter convergence under finite excitation, \TAC, vol. 63, no. 3, pp. 811-818, 2018.
	
	\bibitem{CHOetal_ijacsp13}
	G. Chowdhary, T. Yucelen, M. Mhlegg and E. Johnson, Concurrent learning adaptive control of linear systems with exponentially convergent bounds, \IJACSP, vol. 27, no. 4, pp. 280-301, 2013.
	
	\bibitem{KRERIE}
	G. Kreisselmeier and G. Rietze-Augst, Richness and excitation on an interval---with application to continuous-time adaptive control, \TAC, Vol. 35, No. 2, pp. 165-171, 1990. 
	
	\bibitem{ORT} 
	R. Ortega,  An on-line least-squares parameter estimator with finite convergence time, {Proc. of the IEEE}, vol. 76, no. 7, 1988.
	
	\bibitem{ORTetal_aut20}
	R. Ortega, D. Gerasimov, N. Barabanov and V. Nikiforov, Adaptive control of linear multivariable systems using dynamic regressor extension and mixing estimators: Removing the high-frequency gain assumption, \AUT, {\tt (doi.org/10.1016/j.automatica.2019.108589.)}.
	
	\bibitem{ORTetal_tac20}
	R. Ortega, S. Aranovskiy, A. A. Pyrkin, A. Astolfi and A. A. Bobtsov, New results on parameter estimation via dynamic regressor extension and mixing: continuous and discrete-time cases, {IEEE Transactions on Automatic Control},  ({\tt 10.1109/TAC.2020.3003651}), 2020.
	
	\bibitem{RIOetal_tac17}
	H. Rios, D. Efimov, J. Moreno and W. Perruquetti, Time-varying parameter identification algorithms: finite and fixed-time convergence, {IEEE Transactions on Automatic Control}, vol. 62, no. 7, pp. 3671-3678, 2017.
	
	\bibitem{ROYetal_tac18}
	S.B. Roy and S. Bhasin and I.N. Kar, Combined MRAC for unknown MIMO LTI systems with parameter convergence, { IEEE Transactions on Automatic Control}, vol. {63}, no. {1}, pp. {283-290}, {2018}.
	
	\bibitem{SASBODbook}
	S. Sastry and M. Bodson, {Adaptive Control: Stability, Convergence and Robustness}, Prentice-Hall, New Jersey, 1989.
	
	\bibitem{WANetal_ejc20}
	J. Wang, D. Efimov, S. Aranovskiy and A. Bobtsov, Fixed-time estimation of parameters for non-persistent excitation, \EJC, vol. 55, pp. 24-32, 2020
	
	\bibitem{WANetal_tac20}
	Wang, Jian, Denis Efimov, and Alexey A. Bobtsov. On robust parameter estimation in finite-time without persistence of excitation. IEEE Transactions on Automatic Control vol. 65 N. 4  pp. 1731-1738, 2020.
\end{thebibliography}
\end{document}